\title{Hessian of the Busemann function on Damek-Ricci spaces}
\date{\today}
\author[H.~Satoh]{Hiroyasu Satoh}
\address{Liberal Arts and Sciences, Nippon Institute of Technology, 4-1 Gakuendai, Miyashiro-machi, Saitama 345-8501, Japan}
\email{hiroyasu@nit.ac.jp}
\keywords{Damek-Ricci space, Busemann function, Hessian, eigenvalue}
\subjclass[2010]{53C20, 58C40, 22E60}
\def\ad{\mathop{\mathrm{ad}}\nolimits}
\def\ker{\mathop{\mathrm{Ker}}\nolimits}
\def\id{\mathop{\mathrm{id}}\nolimits}
\theoremstyle{definition}
\newtheorem{thm}{Theorem}
\newtheorem{lem}[thm]{Lemma}
\newtheorem{prop}[thm]{Proposition}
\newtheorem{Def}[thm]{Definition}
\newtheorem{rem}[thm]{Remark}
\begin{document}
\maketitle

\begin{abstract}
In this note, we calculate the Hessian $H_\theta=\nabla d b_\theta$ of the Busemann function $b_\theta$ on a Damek-Ricci space.
We investigate eigenvalues of $H_\theta$ and show its positive definiteness.
\end{abstract}

\section{Introduction}

A Damek-Ricci space is a Riemannian manifold formed by a one-dimensional extension of a nilpotent Lie group known as the generalized Heisenberg group.
This class of manifolds includes the complex hyperbolic space, quaternionic hyperbolic space, and the octonionic hyperbolic plane.
As a special case, the real hyperbolic space can also be regarded as a Damek-Ricci space.  

A Riemannian manifold $M$ is called a harmonic manifold if the volume density function of any geodesic sphere in $M$ depends only on the radius.
A conjecture about harmonic manifolds states that \textit{harmonic manifolds are either rank one symmetric spaces or flat Euclidean spaces}.
This conjecture, known as the Lichnerowicz conjecture, has been proven true in the 4-dimensional case and the compact case.
However, the Damek-Ricci spaces provide counterexamples to this conjecture in the non-compact case.
For details on the Lichnerowicz conjecture, see \cite{K}.

The Busemann function $b_\gamma$, determined by a geodesic $\gamma$, acts like a distance function from infinity, plays a crucial role in the analysis of non-compact spaces.  
In spaces such as a Hadamard manifold $M$, where an ideal boundary $\partial M$ can be defined, the Busemann function can be regarded as a function on $M$ determined by a boundary point $\theta \in \partial M$, provided that a base point is fixed.
The Busemann function is significant for analyzing the asymptotic behavior of geodesics and for understanding the structure of the ideal boundary.
 
A level hypersurface of a Busemann function is called a horosphere.  
The shape operator of a horosphere is determined by the Hessian of the Busemann function.
Note that while the Hessian of a function is a $(0,2)$-tensor field,
it can be naturally regarded as a $(1,1)$-tensor field via the metric $g$, as is customary.
In rank one symmetric spaces of non-compact type,
the Hessian $H_\theta$ of the Busemann function can be expressed as
$$
H_\theta=g-db_\theta\otimes db_\theta+\sum_{i=1}^d db_\theta\circ J_i\otimes db_\theta\circ J_i,
$$
where $\{J_i\}_{i=1,\ldots,d}$ is the associated complex ($d=1$) or quaternionic ($d=3$), octonionic ($d=7$) structure.
In the case of real hyperbolic space, the summation term vanishes.
This indicates that horospheres are hypersurfaces with constant principal curvatures,
and their spectra are common for all horospheres.  
\textit{Does this property characterize rank one symmetric spaces of non-compact type as the only non-compact Riemannian manifolds with such features?}
In \cite{IS2013}, we investigated this question and obtained partial results.  
For example, suppose all horospheres in $M$ are umbilic at every point,
and the values of the principal curvatures are the same for all horospheres.
In that case, $M$ is isometric to real hyperbolic space \cite[Theorem 1.1]{IS2013}.

In this paper, we compute the Hessian of the Busemann function on Damek-Ricci spaces and analyze the properties of its spectrum.

\begin{thm}\label{main0}
Let $S$ be a Damek-Ricci space, which is a one-dimensional extension of the generalized Heisenberg group $N=\exp(\frak{v}\oplus\frak{z})$ and $\partial S\cong N\cup\{\infty\}$ be the ideal boundary of $S$.
Let $b_\theta$ be the Busemann function on $S$ determined by a boundary point $\theta\in\partial S$ and $H_\theta$ be its Hessian.
\begin{itemize}
\item[(1)] When $\theta = \infty$, $H_\theta$ has eigenvalues $0, \dfrac{1}{2}$ and $1$ at any point $p \in S$, with multiplicities $1, k=\dim \frak{v}$ and $m=\dim \frak{z}$,
respectively.
\item[(2)]  When $\theta = (v, z)\ (\ne \infty)$ and $p=(V, Z, a)\in S\cong \frak{v}\times\frak{z}\times\mathbb{R}_+$,
\begin{itemize}
\item[(2-i)] if $\mathcal{V}=v-V=0$ or $\mathcal{Z}=z-Z-\dfrac{1}{2}[V, v]=0$,
then $H_\theta$ has eigenvalues $0, \dfrac{1}{2}$ and $1$ at $p$, with multiplicities $1, k$ and $m$,
respectively.
\item[(2-ii)] if $\mathcal{V}\ne 0$ and $\mathcal{Z}\ne 0$, then
\begin{itemize}
\item[(2-ii-a)] The restriction $H_\theta|_{\frak{s}_4}$ has eigenvalues $0, \dfrac{1}{2}$ and $1$ at $p$, with multiplicities $1, 2$ and $1$,
respectively.
Here $\frak{s}_4$ is a subspace of $T_pS$ spaned by $\{\mathcal{V}, J_\mathcal{Z}\mathcal{V}, \mathcal{Z}, A\}$.
\item[(2-ii-b)] The eigenvalues $\lambda$ of $H_\theta|_{\frak{s}_4^\perp}$ are solutions of the following cubic equation
\begin{equation}\label{mu0}
\left(\lambda-\dfrac{1}{2}\right)^2(\lambda-1)
=-(\mu+1)\frac{a^2|\mathcal{V}|^4|\mathcal{Z}|^2}{8F^3}
\end{equation}
and these are all positive.
Here $\mu$ is an eigenvalue of the symmetric operator $K^2$ (see Section 2.2 for the definition), and $F$ is a positive function on $S\times N$ (see Section 3.1.2 for the definition).
\end{itemize}
\end{itemize}
\end{itemize}
\end{thm}
This theorem shows that if all horospheres in a Damek-Ricci space $S$ have constant principal curvatures, then $S$ is a rank one symmetric space of non-compact type.

\begin{rem}
We note that the foundational aspects of the Hessian computation in \cite{IKPS} are based on earlier results from a preliminary version of this work.
In \cite{IKPS}, the focus is on the spectral analysis of the Hessian in connection with the rank of geodesics and the visibility axiom, utilizing a slightly different framework and notation.
In contrast, this paper revisits the Hessian's spectral properties from a broader perspective, further clarifying the eigenvalue structure and its implications in the geometry of Damek-Ricci spaces.
These differences in approach provide complementary insights into the problem.
\end{rem}

The paper is organized as follows.
In Section 2, we provide the necessary preliminaries, including a review of generalized Heisenberg groups, the operator $K$, and the definition of Damek-Ricci spaces.
Section 3 focuses on the Hessian $H_\theta$ of the Busemann function on Damek-Ricci spaces, where we analyze its components and eigenvalues in detail for both $\theta=\infty$ and $\theta\ne\infty$.
The proof of the main theorem, Theorem 1, is presented here, including the spectral properties of $H_\theta$.

\section{Preliminaries}

\subsection{Generalized Heisenberg groups}

Let $(\frak{n}, [\cdot,\cdot]_\frak{n})$ denote a 2-step nilpotent Lie algebra
with a positive definite inner product $\langle\cdot,\cdot\rangle_\frak{n}$.
Let $\frak{z}$ denote the center of $\frak{n}$ and $\frak{v}$ its orthogonal complement.
For $Z\in \frak{z}$, we define a skew-symmetric linear map
$J_Z : \frak{v}\rightarrow \frak{v}$ by $\langle J_Z V_1, V_2\rangle_\frak{n}=\langle Z, [V_1,V_2]_\frak{n}\rangle_\frak{n}$ for $V_1, V_2\in \frak{v}$.
If for every $Z\in\frak{z}$,
\begin{equation}\label{def_gha}
(J_Z)^2=-|Z|^2\mathrm{id}_\frak{v}
\end{equation}
holds, then we say that $\frak{n}=(\frak{n},  [\cdot,\cdot]_\frak{n},  \langle\cdot,\cdot\rangle_\frak{n})$ is a {\it generalized Heisenberg algebra}.
Here $\mathrm{id}_\frak{v}$ is the identity map on $\frak{v}$.

\begin{lem}[{\cite[p.24--25]{BTV}}]\label{gHprop}
Let $(\frak{n}, [\cdot,\cdot]_\frak{n},  \langle\cdot,\cdot\rangle_\frak{n})$ be a generalized Heisenberg algebra.
For $V, V_i\in\frak{v}$ and $Z, Z_i\in\frak{z}$ ($i=1, 2$),
the following equations hold:
\begin{enumerate}
\item $J_{Z_1} \circ J_{Z_2}+J_{Z_2} \circ J_{Z_1}=-2\langle Z_1, Z_2\rangle \mathrm{id}_\frak{v}$
\item $\langle J_{Z_1} V_1, J_{Z_2} V_2\rangle+\langle J_{Z_2} V_1, J_{Z_1} V_2\rangle
=2\langle V_1, V_2\rangle\langle Z_1, Z_2\rangle$
\item $\langle J_Z V_1, J_Z V_2\rangle=|Z|^2\langle V_1, V_2\rangle$
\item $\langle J_{Z_1} V, J_{Z_2} V\rangle=|V|^2\langle Z_1, Z_2\rangle$
\item $[J_Z V_1, V_2]-[V_1, J_Z V_2]=-2\langle V_1, V_2\rangle Z$
\item $[J_{Z_1} V, J_{Z_2} V]=[V, J_{Z_1}J_{Z_2} V]$
\item $[J_Z V_1, J_Z V_2]=-|Z|^2[V_1, V_2]-2\langle V_1, J_Z V_2\rangle Z$
\item $[V, J_Z V]=|V|^2 Z$
\end{enumerate}
\end{lem}

\begin{proof}
Using the polarization identity, (i) can be derived from \eqref{def_gha}.
(ii) is obtained from the skew-symmetry of $J_Z$ and (i).
In (ii), replacing $Z_1$ and $Z_2$ with $Z$ yields (iii).  
Similarly, in (ii), replacing $V_1$ and $V_2$ with $V$ yields (iv).

Equations (v) and (i) are equivalent.  
In (v), replacing $Z$ with $Z_1$, $V_1$ with $V$, and $V_2$ with $J_{Z_2} V$ yields (vi).  
In (v), replacing $V_2$ with $J_Z V_2$ also yields (vi).  
Additionally, in (v), replacing both $V_1$ and $V_2$ with $V$ yields (vi).
\end{proof}

We denote the forml adjoint map of $\ad(V)=[V,\,\cdot\,] : \frak{v}\rightarrow\frak{z}$ by $\ad(V)^\ast$.
Then, (v) and (viii) in lemma \ref{gHprop} are written as
\begin{equation}
\ad(V_1)\circ \ad(V_2)^\ast
+\ad(V_2)\circ \ad(V_1)^\ast
=2\langle V_1, V_2\rangle \mathrm{id}_{\frak{z}},
\end{equation}
\begin{equation}
\ad(V)\circ \ad(V)^\ast
=|V|^2 \mathrm{id}_{\frak{z}}.
\end{equation}

Fix a non-zero vector $V\in \frak{v}$, we obtain an orthogonal decomposition of $\frak{v}$ as follows:
\begin{equation}\label{decompv}
\frak{v}=\ker(\ad(V))\oplus J_\frak{z} V,\quad
J_\frak{z} V:= \{J_Y V\,|\,Y\in \frak{z}\}.
\end{equation}
Since $V\in\ker(\ad(V))$, $\ker(\ad(V))$ can be decomposed as the orthogonal direct sum $\mathbb{R}V$ and its orthogonal complement in $\ker(\ad(V))$, denoted by $\ker(\ad(V))_0$:
\begin{equation}
\frak{v}=\mathbb{R}V\oplus\ker(\ad(V))_0\oplus J_\frak{z} V.
\end{equation}
Moreover, fix  a non-zero vector $Z\in \frak{z}$, we obtain an orthogonal decomposition
\begin{equation}
J_\frak{z} V=\mathbb{R}J_Z V \oplus J_{Z^\perp} V,\quad
Z^\perp=\{Y\in \frak{z}\,|\,\langle Z, Y\rangle_{\frak{n}}=0\}.
\end{equation}
Hence we have
\begin{equation}\label{decomp1}
\mathfrak{v}=\mathbb{R}V\oplus\mathbb{R}J_Z V\oplus\ker(\ad(V))_0\oplus J_{Z^\perp} V.
\end{equation}
By using $J_Z V$ instead of $V$, we obtain
\begin{equation}\label{decomp2}
\mathfrak{v}=\mathbb{R}V\oplus\mathbb{R}J_Z V\oplus\ker(\ad(J_Z V))_0\oplus J_{Z^\perp} (J_Z V).
\end{equation}
From \eqref{decomp1} and \eqref{decomp2}, we have
\begin{equation}\label{decomp3}
\mathfrak{v}=\mathbb{R}V\oplus\mathbb{R}J_Z V\oplus
\mathrm{span}\{J_{Z^\perp} V, J_{Z^\perp} (J_Z V)\}\oplus
\left(
\ker(\ad(V))_0\cap \ker(\ad(J_Z V))_0
\right).
\end{equation}

If $\frak{n}$ satisfies the $J^2$-condition, i.e., for any orthogonal vectors $Z_1, Z_2\in\frak{z}$,
there exists $Z_3\in\frak{z}$ such that $J_{Z_1}\circ J_{Z_2}=J_{Z_3}$,
then we can find that
$$
J_{Z^\perp} V=J_{Z^\perp} (J_Z V),\quad
\ker(\ad(V))_0=\ker(\ad(J_Z V))_0.
$$

\subsection{The operator $K$ and the map $\overline{K}$}\label{secK}

In this section, we choose arbitrary unit vectors $V\in\frak{v}, Z\in\frak{z}$ and fix them.

\begin{Def}\label{defK}
We define an endomorphism $K=K_{V, Z} : Z^\perp\rightarrow Z^\perp$ by
$$
K(X):=\ad(V)\circ \ad(J_ZV)^\ast (X)= [V, J_X J_Z V].
$$
\end{Def}

We can immediately see that $K_{V,Z}$ is skew-symmetric with respect to $\langle\,\cdot\,,\,\cdot\,\rangle_{\mathfrak{n}}$.

The operator $K$ can be interpreted as follows.
From the decomposition \eqref{decomp1} of $\frak{v}$,
we find that for $X\in Z^\perp$ there exist $U\in \ker(\ad(V))_0$ and $Z'\in Z^\perp$ such that $J_X J_Z V=U+J_{Z'} V$.
Then, by using lemma \ref{gHprop} (viii), we have
$$
K(X)=[V, J_X J_Z V]=[V, J_{Z'} V+U]=[V, J_{Z'} V]=|V|^2 Z'=Z',
$$
from which we have
\begin{equation}\label{Kmean}
J_X J_Z V=J_{K_{V,Z}(X)} V+U.
\end{equation}
Therefore, $J_{K_{V,Z}(X)} V$ is the $J_{\frak{z}}V$-part of $J_X J_Z V$.


\begin{rem}
Since $K^2$ is a symmetric operator, the eigenvalues of $K^2$ are all real numbers and it is also known that they lie within $[-1, 0]$.
Moreover, the eigenspace $L_\mu$ corresponding to a non-zero eigenvalue $\mu$ has even dimension,
because if $K^2(X)=\mu X$, then $K^2(K(X))=\mu K(X)$.
See \cite[pp.33--34]{BTV} for details.
\end{rem}

\begin{Def}\label{defKbar}
We define a linear map $\overline{K}_{V, Z} : \ker(\ad(V))_0\rightarrow Z^\perp$ by the restriction of $\ad(J_Z V)$ to $\ker(\ad(V))_0$.
\end{Def}

\begin{lem}
The operator $K=K_{V, Z}$ and the map $\overline{K}=\overline{K}_{V, Z}$ satisfy
\begin{equation}\label{KKbar}
\overline{K}\circ \overline{K}^*=\mathrm{id}_{Z^\perp}+K^2,
\end{equation}
where the asterisk $^*$ means the formal adjoint map with respect to $\langle\,\cdot\,,\,\cdot\,\rangle_\mathfrak{n}$.
\end{lem}

\begin{proof}
At first, we remark that $\ad(J_Z V)^*(Z^\perp)$ is not a subset of $\ker(\ad(V))_0$.
Hence, we have $\overline{K}^*=\pi_0\circ \ad(J_Z V)^*$,
where $\pi_0$ is the projection from $\frak{v}$ onto $\ker(\ad(V))_0$.
From \eqref{Kmean}, we have for any $X\in Z^\perp$
\begin{align*}
\overline{K}\circ \overline{K}^*(X)
=&\ad(J_Z V)\circ \pi_0\circ \ad(J_Z V)^*(X)\\
=&\ad(J_Z V)\circ \pi_0(J_X J_Z V)\\
=&\ad(J_Z V)(J_XJ_Z V-J_{K_{V,Z(X)}}V)\\
=&[J_Z V,J_X J_Z V]-[J_Z V,J_{K_{V,Z(X)}}V]\\
=&|J_Z V|^2X-[J_{K_{V,Z(X)}}J_Z V,V]\\
=&X+[V, J_{K_{V,Z}(X)}J_Z V]=X+K_{V,Z}^2(X),
\end{align*}
from which \eqref{KKbar} is obtained.
\end{proof}

\begin{rem}
\begin{enumerate}
\item the subspace $\ker(\ad(V))_0\cap \ker(\ad(J_Z V))_0$ in \eqref{decomp3} is the kernel of $\overline{K}_{V, Z}$.
\item If $\frak{n}$ satisfies the $J^2$-condition, then $\overline{K}_{V, Z}^\ast=0$.
Hence, in this case, $K_{V, Z}^2=-\mathrm{id}_{Z^\perp}$.
\end{enumerate}
\end{rem}

\subsection{Damek-Ricci spaces}

Let $(\frak{n}, [\cdot,\cdot]_\frak{n}, \langle\cdot,\cdot\rangle_\frak{n})$
be a generalized Heisenberg algebra.
Let $\frak{s}:=\frak{n}\oplus \frak{a}$,
where $\frak{a}=\mathbb{R} A$ is a one dimensional real vector space with generator $A$,
and define a bracket product $[\cdot,\cdot]_\frak{s}$ and a inner product
$\langle\cdot,\cdot\rangle_\frak{s}$ by
\begin{align}
\label{DR_Lbs}
[U_1+Y_1+s_1A, U_2+Y_2+s_2A]_\frak{s}
=&\frac{s_1}{2}U_2-\frac{s_2}{2}U_1+s_1Y_2-s_2Y_1+[U_1,U_2]_\frak{n},\\
\label{DR_Rmtrc}
\langle U_1+Y_1+s_1A, U_2+Y_2+s_2A\rangle_\frak{s}
=&\langle U_1, U_2\rangle_\frak{n}+\langle Y_1, Y_2\rangle_\frak{n}+s_1s_2,
\end{align}
where $U_i\in\frak{v}, Z_i\in\frak{z}, s_i\in\mathbb{R}$ ($i=1, 2$),
respectively.
We find immediately that the derived subalgebra
$[\frak{s},\frak{s}]_\frak{s}$ of $\frak{s}$ is equal to $\frak{n}$.
This shows that $\frak{s}$ is a solvable Lie algebra.

\begin{Def}[\cite{DR}]
Let $S$ be a simply connected Lie group whose Lie algebra is $(\frak{s}, [\,\cdot\,,\,\cdot\,]_{\frak{s}})$ equipped with the left invariant metric $g$ induced by $\langle\cdot,\cdot\rangle_\frak{s}$.
We call $(S, g)$ a \textit{Damek-Ricci space}.
\end{Def}

If we identify $S$ with $\frak{v}\times\frak{z}\times\mathbb{R}_+$
via the exponential map, the group structure on $S$ is given by
\begin{equation}\label{DR_m}
(V_1,Z_1,a_1)\cdot(V_2,Z_2,a_2)
=\left(V+\sqrt{a_1}V_2,\,Z+a_1Z_2+\frac{\sqrt{a_1}}{2}[V_1, V_2],\,a_1 a_2\right).
\end{equation}
We refer to \cite[Chap. 4]{BTV} for details.

\subsection{The Busemann function}

Let $(M, g)$ be a complete, simply connected, non-compact Riemannian manifold with no focal points.
Here no focal point means that every geodesic has no focal point as a one dimensional submanifold (see \cite{I}).
Then, for a geodesic $\gamma : \mathbb{R}\rightarrow M$ with unit speed,
we can define a function $b_\gamma$ on $M$, called the \textit{Busemann function}, by
\begin{equation}
b_\gamma(x)=\lim_{t\rightarrow \infty}(d(\gamma(t), x)-t).
\end{equation}
The Busemann function is a $C^2$--convex function \cite{BGS}.
Then we can define the Hessian $\nabla d b_\gamma$ of the Busemann function and find that $\nabla d b_\gamma$ is positive semi-definite.

\begin{rem}
The gradient vector field $\nabla b_\gamma$ of the Busemann function $b_\gamma$ is a unit vector field.
Hence, we find that the Hessian of $b_\gamma$ has $0$-eigenvalue associated with $\nabla b_\gamma$.
\end{rem}

Every Riemannian manifold $M$, such as those described above, carries its ideal boundary, denoted by $\partial M$, which is a quotient space of all geodesic rays on $M$ divided by an equivalence relation $\sim$ defined as follows:
$\gamma_1\sim\gamma_2$ if and only if $d(\gamma_1(t), \gamma_2(t))<\infty$ on $t\in [0, \infty)$.
If $\gamma_1\sim\gamma_2$, then $b_{\gamma_1}-b_{\gamma_2}$ is a constant function on $M$.
Hence, we find that if $\gamma_1\sim\gamma_2$, then the Hessian of $b_{\gamma_1}$ coincides with of $b_{\gamma_2}$.
Namely, the Hessian of the Busemann function, denoted by $H_\theta$, is determined for a boundary point $\theta\in\partial M$.

We refer to \cite{I} for details about the Busemann function.
\smallskip

A Damek-Ricci space $S$ is a Hadamard manifold, i.e., a complete, simply connected, non-compact Riemannian manifold whose sectional curvature is non-positive.
It is known that such a space has no focal points (see \cite{I}).
The ideal boundary of $S$ is identified with $N\cup\{\infty\}$ (see \cite{ADY, IS2010}).
Let $b_\theta$ be the Busemann function of a Damek-Ricci space $S$.
We refer to a group-theoretic representation of the Busemann function to \cite{IS2010} as follows.

\begin{prop}[{\cite[Theorem 4]{IS2010}}]\label{busemannDR}
For $p=(V,Z,a)\in S\simeq \frak{v}\times\frak{z}\times \mathbb{R}$,
the Busemann function is represented by
\begin{equation}
b_\theta(p)=\left\{
\begin{aligned}
&\log\left(
\frac{\left(
a+\frac{1}{4}|v-V|_{\mathfrak{n}}^2
\right)^2
+\left|
z-Z-\frac{1}{2}[V,v]_{\frak{n}}
\right|^2}{a
\left(
\left(1+\frac{1}{4}|v|_{\mathfrak{n}}^2\right)^2+|z|_{\mathfrak{n}}^2
\right)}
\right),&\theta&=(v,z)\\
&-\log a,&\theta&=\infty
\end{aligned}
\right.
\end{equation}
\end{prop}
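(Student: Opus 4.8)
The plan is to establish the formula first at the two distinguished boundary points $\infty$ and $0\in\frak n$ by directly evaluating the limit defining $b_{\theta}$, and then to deduce it at a general finite boundary point $\theta=(v,y)$ by transporting the base point $0$ to $\theta$ with a left translation. It is worth noting at the outset that the normalization implicit in the statement is $b_{\theta}(e)=0$: the displayed right-hand side visibly vanishes at $x=e=(0,0,1)$, so the rays defining $b_\theta$ are taken through $e$.

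The one non-elementary ingredient I would quote is the radial distance function of a Damek-Ricci space: by \cite{ADY} (see also \cite[Chap.~4]{BTV}) one has, for $x=(V,Y,a)\in S$,
\[
\cosh^{2}\!\Bigl(\tfrac{1}{2}\,d(e,x)\Bigr)=\frac{\bigl(1+a+\tfrac{1}{4}|V|^{2}\bigr)^{2}+|Y|^{2}}{4a}.
\]
Next I would note that $\gamma(s):=\exp(sA)=(0,0,e^{s})$ is a unit-speed geodesic through $e$ with $\lim_{s\to+\infty}\gamma(s)=\infty$ and $\lim_{s\to-\infty}\gamma(s)=0$ in $\partial S$; here $|A|=1$, and $\nabla_{A}A=0$ follows from \eqref{Lbs} via the Koszul formula since $[A,\,\cdot\,]_{\frak s}$ has no $\frak a$-component. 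Since left translations are isometries, \eqref{mS} gives $d(\gamma(\pm t),x)=d\bigl(e,(e^{\mp t/2}V,\,e^{\mp t}Y,\,e^{\mp t}a)\bigr)$, and substituting the radial formula and letting $t\to\infty$ (using $\cosh^{2}(\tfrac{1}{2}r)\sim\tfrac{1}{4}e^{r}$ as $r\to\infty$) yields
\[
b_{\infty}(V,Y,a)=-\log a,\qquad b_{0}(V,Y,a)=\log\frac{\bigl(a+\tfrac{1}{4}|V|^{2}\bigr)^{2}+|Y|^{2}}{a}.
\]

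For a general $\theta=(v,y)\in\frak n$ I would put $\psi:=L_{(v,y,1)}$, the left translation by $(v,y,1)\in S$. This is an isometry, it acts on $\partial S\setminus\{\infty\}=\frak n$ by the nilpotent group law, and in particular $\psi(0)=\theta$, so $\psi$ carries the ray to $0$ onto a ray asymptotic to $\theta$. By the remark in the text that asymptotic rays give Busemann functions differing by a constant, $b_{\theta}(x)=b_{0}\bigl(\psi^{-1}(x)\bigr)+c_{\theta}$ for some constant $c_{\theta}$. From \eqref{mS}, $\psi^{-1}(V,Y,a)=(-v,-y,1)\cdot(V,Y,a)=\bigl(V-v,\,Y-y-\tfrac{1}{2}[v,V]_{\frak n},\,a\bigr)$, so that, using $[v,V]_{\frak n}=-[V,v]_{\frak n}$ and $|v-V|=|V-v|$,
\[
b_{\theta}(V,Y,a)=\log\frac{\bigl(a+\tfrac{1}{4}|v-V|^{2}\bigr)^{2}+\bigl|\,y-Y-\tfrac{1}{2}[V,v]_{\frak n}\,\bigr|^{2}}{a}+c_{\theta}.
\]
Imposing $b_{\theta}(e)=0$ forces $c_{\theta}=-\log\bigl((1+\tfrac{1}{4}|v|^{2})^{2}+|y|^{2}\bigr)$, which is precisely the denominator appearing in the statement, so this gives the claimed formula.

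The step I expect to be the main obstacle is simply securing the radial distance function in the explicit coordinate form above; once that is in hand, what remains is a short asymptotic estimate, the bookkeeping of the constant $c_{\theta}$, and the verification that $\psi$ really does move the ray to $0$ onto one asymptotic to $\theta$ (equivalently, that $N$ acts on $\partial S\setminus\{\infty\}$ by the nilpotent group law, which one checks by applying $\psi$ to a vertical ray and identifying the limit). A self-contained but more laborious alternative would bypass the radial function by integrating the geodesic equations on $S$ to exhibit a ray to $(v,y)$ directly and then passing to the limit; I would prefer to quote \cite{ADY, BTV}.
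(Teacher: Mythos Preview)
The paper does not actually prove this proposition: it is quoted verbatim from \cite[Theorem~4]{IS} and used as input for the Hessian computations, so there is no in-paper argument to compare against. Your outline is nonetheless a correct and essentially standard derivation. The radial distance formula you cite is exactly the one in \cite{ADY} and \cite[Chap.~4]{BTV}; the asymptotics $\cosh^{2}(\tfrac12 r)\sim\tfrac14 e^{r}$ combined with the left-invariance of the metric give $b_{\infty}$ and $b_{0}$ as you wrote, and the transport step via $L_{(v,y,1)}$ together with the normalization $b_{\theta}(e)=0$ recovers the displayed expression. The only places worth tightening in a written version are: (a) justifying that the endpoint of the ray $\psi\circ\gamma|_{(-\infty,0]}$ in $\partial S$ is indeed $(v,y)$ (this follows from the explicit formula $\psi(0,0,e^{-t})=(v,y,e^{-t})$ and the identification of $\partial S\setminus\{\infty\}$ with $\frak n$ in \cite{ADY,IS}), and (b) being explicit that the ``$\sim$'' in the asymptotic step means the difference $d(\gamma(\pm t),x)-t$ converges, which follows since $\log(4\cosh^{2}(\tfrac12 r))-r\to 0$. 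With those minor clarifications your argument is complete.
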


\section{Hessian of the Busemann function on $S$}


In this section, we compute the Hessian $H_\theta=\nabla db_\theta$ of the Busemann function $b_\theta$ on a Damek-Ricci space $S$.
We consider both cases: $\theta=\infty$ and $\theta\ne\infty$, and analyze the eigenvalues of $H_\theta$ in detail.
The map $K^2$ and its spectral properties play a key role in our computations.

\subsection{Components of $H_\theta$}

To proceed, we introduce a global coordinate system for $S\simeq \frak{v}\times\frak{z}\times\mathbb{R}_+$.
Let $\{e_i\}_{i=1,\ldots,k}$ and $\{e_{k+r}\}_{r=1,\ldots,m}$ be orthonormal bases of $\frak{v}$ and $\frak{z}$, respectively, with $V^i, i=1,\ldots, k$, and $Z^r, r=1,\ldots, m$ as the corresponding coordinates on $N\simeq\frak{v}\times\frak{z}$.
Here $k=\dim \frak{v}$ and $m=\dim \frak{z}$.
Moreover, we set $a : \mathbb{R}A\ni sA\mapsto e^s\in \mathbb{R}_+$.
Then, a left-invariant orthonormal frame field of $S$ at $p=(V, Z, a)$ is written as
\begin{equation}\label{ONF_S}
\left\{
\begin{aligned}
E_0=&\,a\frac{\partial}{\partial a},&&\\
E_i=&\,\sqrt{a}\frac{\partial}{\partial V^i}
-\frac{\sqrt{a}}{2}\sum_{j=1}^k\sum_{r=1}^mA_{ij}^rV^j\frac{\partial}{\partial Z^r},&(i=&1,2,\ldots,k),\\
E_{k+r}=&\,a\frac{\partial}{\partial Z^r},&(r=&1,2,\ldots,m),
\end{aligned}
\right.
\end{equation}
where $A_{ij}^r:=\langle[e_i,e_i]_{\frak{n}},e_{k+r}\rangle$ (see \cite[4.1.5]{BTV}).
Then, we have
\begin{gather}
\nabla_{E_i}E_j
=\frac{1}{2}\sum_r A_{ij}^r E_{k+r}+\frac{1}{2}\delta_{ij}E_0,\quad
\nabla_{E_i}E_{k+r}=\nabla_{E_{k+r}}E_i
=-\frac{1}{2}\sum_jA_{ij}^r E_j,\notag\\
\nabla_{E_i}E_0=-\frac{1}{2}E_i,\quad
\nabla_{E_{k+r}}E_{k+l}=\delta_{rl}E_0,\quad
\nabla_{E_{k+r}}E_0=-E_{k+r},\quad\notag
\nabla_{E_0}E_\alpha=0,\notag
\end{gather}
where $1\le i, j\le k$, $1\le r, l\le m$ and $1\le \alpha\le k+m$ (see \cite[4.1.6]{BTV}).

Now we compute components of $H_\theta=\left(b_{\alpha,\beta}\right)$ with respect to $\{E_\alpha\}_{\alpha=0, 1, \ldots, k+m}$,
i.e.,
\begin{equation*}
b_{\alpha,\beta}
:=(\nabla_{E_\alpha} db_\theta)(E_\beta)
=E_\alpha(E_\beta b_\theta)-(\nabla_{E_\alpha} E_\beta)b_\theta.
\end{equation*}

\subsubsection{The case of $\theta=\infty$}

From Proposition \ref{busemannDR}, we immediately obtain the following.

\begin{thm}\label{H_infty}
If $\theta=\infty$, then we have
\begin{equation*}
b_{i,j}=\frac{1}{2}\delta_{ij},\qquad
b_{k+r,k+l}=\delta_{rl},\qquad
\mbox{(otherwise)}=0.
\end{equation*}
\end{thm}

From this theorem, we obtain Theorem \ref{main0} (1).
This result aligns with the known structure of horospheres in symmetric spaces.

\subsubsection{The case of $\theta\ne\infty$}

To compute $H_\theta$ in the case $\theta\ne \infty$, it is convenient to define certain maps on $S\times N$ by
\begin{equation}\label{vec}
\begin{split}
\mathcal{V} : S\times N\rightarrow \frak{v};\ \mathcal{V}(p, \theta)&=v-V,\\
\mathcal{Z} : S\times N\rightarrow \frak{z};\ \mathcal{Z}(p, \theta)&=z-Z-\frac{1}{2}[V,v]_{\frak{n}},
\end{split}
\end{equation}
and positive functions $f, F : S\times N \rightarrow \mathbb{R}_+$
\begin{equation}\label{func}
f(p, \theta)=a+\frac{1}{4}|\mathcal{V}(p, \theta)|^2,\qquad
F(p, \theta)=f(p, \theta)^2+|\mathcal{Z}(p, \theta)|^2,
\end{equation}
where $x=(V, Z, a)\in S$ and $\theta=(v, z)\in N\simeq \frak{v}\times\frak{z}$.
Then, we have 
\begin{equation}\label{BusemannDR}
b_\theta(p)=\log F(p,\theta)-\log a+C(\theta).
\end{equation}
Here $N$ means $\partial S\backslash\{\infty\}$ and $C(\theta)$ is a certain function on $N$.
Below, we will omit the variables $(p, \theta)$ and denote simply as $\mathcal{V}, \mathcal{Z}, f$ and $F$.
\smallskip

Easy computation shows that
\begin{align*}
E_0 f=&a,&
E_i f=&-\frac{\sqrt{a}}{2}\langle\mathcal{V}, e_i\rangle,&
E_{k+r} f=&0,\\
E_0F=&2af,&
E_iF=&-\sqrt{a}\langle f\mathcal{V}-J_\mathcal{Z}\mathcal{V}, e_i\rangle,&
E_{k+r}F=&-2a\langle \mathcal{Z},e_{k+r}\rangle,
\end{align*}
from which, by using lemma \ref{gHprop}, we have
\begin{align}
\label{comp00}
b_{0,0}
=&\frac{2a}{F^2}\left(fF+aF-2af^2\right),\\
\label{comp0i}
b_{0,i}
=&-\frac{\sqrt{a}}{2F^2}\left\{
\left(fF+2aF-4af^2\right)\langle\mathcal{V}, e_i\rangle
+\left(4af-F\right)\langle J_\mathcal{Z}\mathcal{V}, e_i\rangle
\right\},\\
\label{comp0r}
b_{0,k+r}
=&\frac{2a}{F^2}\left(2af-F\right)\langle \mathcal{Z},e_{k+r}\rangle,\\
b_{i,j}
=&\frac{a}{2F}\left(
\langle \mathcal{V}, e_i\rangle \langle \mathcal{V}, e_j\rangle
+\langle [e_i, \mathcal{V}]_\frak{n}, [e_j, \mathcal{V}]_\frak{n}\rangle\right)\nonumber\\
&-\frac{a}{F^2}
\langle f\mathcal{V}-J_\mathcal{Z}\mathcal{V}, e_i\rangle
\langle f\mathcal{V}-J_\mathcal{Z}\mathcal{V}, e_j\rangle
+\frac{1}{2}\delta_{ij},\label{compij}\\
b_{i,k+r}
=&-\frac{2a\sqrt{a}}{F^2}
\langle f\mathcal{V}-J_\mathcal{Z}\mathcal{V}, e_i\rangle
\langle \mathcal{Z}, e_{k+r}\rangle\nonumber\\
&-\frac{\sqrt{a}}{2F}\langle [e_i, (f-2a)\mathcal{V}-J_\mathcal{Z}\mathcal{V}]_\frak{n},e_{k+r}\rangle,\label{compir}\\
\label{comprl}
b_{k+r,k+l}
=&-\frac{4a^2}{F^2}\langle\mathcal{Z},e_{k+r}\rangle\langle\mathcal{Z},e_{k+l}\rangle
+\frac{1}{F}\left(F-2af+2a^2\right)\delta_{rl}.
\end{align}

\begin{prop}\label{H_neinfty}
Assume that $\theta\ne \infty$.
\begin{enumerate}
\item[(I)] If $\mathcal{V}=0$ and $\mathcal{Z}=0$, then we have
\begin{equation*}
b_{i,j}=\frac{1}{2}\delta_{ij},\quad
b_{k+r,k+l}=\delta_{rl},\quad
\mbox{(otherwise)}=0.
\end{equation*}
\item[(II)] If $\mathcal{V}=0$ and $\mathcal{Z}\ne 0$, then we have
$$
b_{0,0}
=\frac{4a^2}{F^2}\left(F-a^2\right),\quad
b_{i,j}=\frac{1}{2}\delta_{ij},\quad
b_{k+r,k+l}=
-\frac{4a^2}{F^2}\langle\mathcal{Z},e_{k+r}\rangle\langle\mathcal{Z},e_{k+l}\rangle
+\delta_{rl},
$$
$$
b_{0,k+r}=
\frac{2a}{F^2}\left(2a^2-F\right)\langle\mathcal{Z}, e_{k+r}\rangle,\quad
\mbox{(otherwise)}=0.
$$
\item[(III)] If $\mathcal{V}\ne0$ and $\mathcal{Z}=0$, then we have
$$
b_{0,0}=\frac{2a(f-a)}{f^2},\quad
b_{i,j}=\frac{a}{2f^2}\left(
-\langle\mathcal{V}, e_i\rangle\langle\mathcal{V}, e_j\rangle
+\langle [e_i, \mathcal{V}]_\frak{n}, [e_j, \mathcal{V}]_\frak{n}\rangle\right)
+\frac{1}{2}\delta_{ij},\\
$$
$$
b_{0,i}=-\frac{\sqrt{a}(f-2a)}{2f^2}\langle\mathcal{V},e_i\rangle,\quad
b_{i,k+r}=\frac{\sqrt{a}(f-2a)}{2f^2}\langle e_i,J_{e_{k+r}}\mathcal{V}\rangle,
$$
$$
b_{k+r,k+l}=\frac{1}{F}\left(F-2af+2a^2\right)\delta_{rl},\quad
\mbox{(otherwise)}=0.
$$
\end{enumerate}
\end{prop}

\begin{proof}
From \eqref{func}, if $\mathcal{V}=0$ and $\mathcal{Z}=0$, then we have $f=a$ and $F=f^2$, respectively.
Substituting these equations into \eqref{comp00} -- \eqref{comprl}, we obtain our lemma.
\end{proof}

From the above proposition, we obtain Theorem \ref{main0} (2-i)

\begin{proof}[Proof of Theorem \ref{main0} (2-i)]
\underline{(I)\ Case of $\mathcal{V}=0$ and $\mathcal{Z}=0$} :
Our assertion is trivial from lemma \ref{H_neinfty} (i).
\smallskip

\noindent\underline{(II)\ Case of $\mathcal{V}=0$ and $\mathcal{Z}\ne0$} :
We choose an orthonormal basis $\{e_{k+r}\}_{r=1,\ldots,m}$ of $\frak{z}$ satisfying
\begin{equation}\label{onb_V}
e_{k+1}=\overline{\mathcal{Z}}
=\frac{1}{|\mathcal{Z}|}\mathcal{Z},\quad
\mbox{and}\quad
e_{k+r}\in \mathcal{Z}^\perp, r=2,\ldots,m.
\end{equation}
Then, we can write the matrix $(b_{\alpha,\beta})$,
of which some rows and columns are interchanged, as
\begin{equation*}
\left(
\begin{array}{ccc}
B_1&O&O\\
O&\frac{1}{2}\mathrm{id}_{\frak{v}}&O\\
O&O&\mathrm{id}_{\mathcal{Z}^\perp}
\end{array}
\right),
\end{equation*}
where
\begin{align*}
B_1=&\left(
\begin{array}{cc}b_{0,0}&b_{0,k+1}\\b_{k+1,0}&b_{k+1,k+1}
\end{array}
\right)\\
=&\frac{1}{F^2}\left(
\begin{array}{cc}
4a^2\left(F-a^2\right)&
2a\left(2a^2-F\right)\sqrt{F-a^2}\\
2a\left(2a^2-F\right)\sqrt{F-a^2}&
\left(F-2a^2\right)^2\\
\end{array}
\right).
\end{align*}
Easy computation show that $\det(B_1)=0$ and $\det\left(B_1-I_2\right)=0$
from which we find that eigenvalues of the $2\times 2$-matrix $B_1$ are $0$ and $1$.
Hence, in this case the matrix $(b_{\alpha, \beta})$ has eigenvalues $0, \frac{1}{2}$ and $1$ whose multiplicities are $1$, $k$ and $1+(m-1)=m$, respectively.
\smallskip

\noindent\underline{(III)\ Case $\mathcal{V}\ne 0$ and $\mathcal{Z}=0$} :
We choose an orthonormal basis of $\frak{v}$ satisfying
\begin{equation}\label{onb_Z}
\left\{
\begin{aligned}
&e_{1}=\overline{\mathcal{V}}=\frac{1}{|\mathcal{V}|}\mathcal{V},&&\\
&e_i\in\ker(\ad(\mathcal{V}))_0,&i=&2,\ldots,k-m,\\
&e_{(k-m)+r}=J_{e_{k+r}}\overline{\mathcal{V}}=\ad(\overline{\mathcal{V}})^\ast(e_{k+r})\in J_\frak{z}\mathcal{V},&r=&1,\ldots,m,
\end{aligned}
\right.
\end{equation}
We remark that in this case $e_{k+r}=\ad(\overline{\mathcal{V}})(e_{(k-m)+r})$.
Then, we can write the matrix $H_\theta=(b_{\alpha,\beta})$ of which some rows and columns are interchanged, as
\begin{equation*}
\left(
\begin{array}{ccc}
B_2&O&O\\
O&\frac{1}{2}\id_{\ker(\ad(\mathcal{V}))_0}&O\\
O&O&B_3
\end{array}
\right),
\end{equation*}
where
\begin{equation*}
\begin{split}
B_2=&\left(
\begin{array}{cc}b_{0,0}&b_{0,1}\\b_{1,0}&b_{1,1}
\end{array}
\right)\\
=&\frac{1}{f^2}\left(
\begin{array}{cc}
2a(f-a)&-\sqrt{a(f-a)}(f-2a)\\
-\sqrt{a(f-a)}(f-2a)&\frac{1}{2}\left(f-2a\right)^2
\end{array}
\right),\\
B_3=&\left(
\begin{array}{cc}
(b_{k-m+r,k-m+l})&
(b_{k-m+r,k+l})\\
(b_{k+r,k-m+l})&
(b_{k+r,k+l})
\end{array}
\right)\\
=&\frac{1}{f^2}\left(
\begin{array}{cc}
\left\{2a(f-a)+\frac{1}{2}f^2\right\}\id_{J_\frak{z}\mathcal{V}}&
-\sqrt{a(f-a)}(f-2a)\ad(\overline{\mathcal{V}})^\ast\circ\id_{\overline{\mathcal{Z}}^\perp}\\
-\sqrt{a(f-a)}(f-2a)\mathrm{id}_{\mathcal{Z}^\perp}\circ\ad(\overline{\mathcal{V}})&
\left(f^2-2af+2a^2\right)\id_\frak{z}
\end{array}
\right).
\end{split}
\end{equation*}
Easy computation show that $\det(B_2)=0$ and $\det\left(B_2-\frac{1}{2}I_2\right)=0$ from which we find that eigenvalues of the $2\times 2$-matrix $B_2$ are $0$ and $\frac{1}{2}$.
On the other hand, by appropriately permuting the rows and columns of $B_3$,
it becomes a block diagonal matrix with the following $2\times 2$-matrix
$$
B_3'=\frac{1}{f^2}\left(
\begin{array}{cc}
2a(f-a)+\frac{1}{2}f^2&
-\sqrt{a(f-a)}(f-2a)\\
-\sqrt{a(f-a)}(f-2a)&
f^2-2a(f-a)
\end{array}
\right).
$$
It is easy to check that satisfies $B_3'$ satisfies $\det(B'_3-I_2)=\det(B'_3-\frac{1}{2}I_2)=0$,
which means that $B_3$ has eigenvalues $\frac{1}{2}$ and $1$ whose multiplicities are $m$.
Hence, in this case the matrix $(b_{\alpha, \beta})$ has eigenvalues $0, \frac{1}{2}$ and $1$ whose multiplicities are $1$, $1+(k-m-1)+m=k$ and $m$, respectively.
\end{proof}

From these computations, we find that when $\mathcal{V}=0$ or $\mathcal{Z}=0$,
the eigenvalues remain $0, \dfrac{1}{2}$, and $1$.
For the general case, the eigenvalues are solutions of a cubic equation, indicating a richer structure.

\subsubsection{The case of $\theta\ne\infty$, $\mathcal{V}\ne 0$ and $\mathcal{Z}\ne 0$}

We choose orthonormal basis $\{e_i, e_{k+r}\}_{1\le i\le k, 1\le r\le m}$ of $\frak{n}=\frak{v}\oplus\frak{z}$ satisfying \eqref{onb_V} and \eqref{onb_Z}.
\bigskip

We set a $4\times 4$-matrix  $B_0$ defined by
\begin{equation*}
B_0
=\left(\begin{array}{cccc}
b_{0,0}&b_{0,1}&b_{0,k-m+1}&b_{0,k+1}\\
b_{1,0}&b_{1,1}&b_{1,k-m+1}&b_{1,k+1}\\
b_{k-m+1,0}&b_{k-m+1,1}&b_{k-m+1,k-m+1}&b_{k-m+1,k+1}\\
b_{k+1,0}&b_{k+1,1}&b_{k+1,k-m+1}&b_{k+1,k+1}\\
\end{array}\right),
\end{equation*}
where
\begin{align*}
b_{0,0}
=&\frac{2a}{F^2}\left(fF+aF-2af^2\right),\\
b_{1,0}
=-b_{k+1,k-m+1}
=&-\frac{1}{F^2}\left(fF+2aF-4af^2\right)\sqrt{a(f-a)},\\
b_{k-m+1,0}
=b_{k+1,1}
=&-\frac{1}{F^2}(4af-F)\sqrt{a(f-a)(F-f^2)},\\
b_{k+1,0}
=&\frac{2a}{F^2}\left(2af-F\right)\sqrt{F-f^2},\\
b_{1,1}
=&-\frac{2a}{F^2}(f-a)(2f^2-F)+\frac{1}{2},\\
b_{k-m+1,1}
=&\frac{4af}{F^2}(f-a)\sqrt{F-f^2},\\
b_{k-m+1,k-m+1}
=&\frac{1}{2}+\frac{2a}{F^2}(f-a)(2f^2-F),\\
b_{k+1,k+1}
=&1-\frac{2a}{F^2}\left(fF+aF-2af^2\right).
\end{align*}

\begin{thm}\label{B1posi}
The eigenvalues of $B_0$ are $0, \frac{1}{2}$ and $1$ whose multiplicities are $1, 2$ and $1$, respectively.
\end{thm}

\begin{proof}
We can compute that
$$\det(B_0)=\det\left(B_0-I_4\right)=\det\left(B_0-\frac{1}{2}I_4\right)=0
\quad\mbox{and}
\quad
\mathrm{tr}(B_0)=2,
$$
from which we obtain our result.
Here $I_n$ is the $n\times n$ identity matrix.
\end{proof}

The above theorem implies Theorem \ref{main0} (2-ii-a).
\smallskip

From \eqref{compir}, for $2\le i\le k-m$ and $2\le r\le m$, we have
\begin{equation*}\label{B2compo}
b_{i,k+r}
=\frac{\sqrt{a}}{2F}\langle [e_i, J_\mathcal{Z} \mathcal{V}]_{\mathfrak{n}}, e_{k+r}\rangle
=-\dfrac{\sqrt{a}}{2F}|\mathcal{V}||\mathcal{Z}|
\langle \overline{K}_{\overline{\mathcal{V}},\overline{\mathcal{Z}}}(e_i), e_{k+r}\rangle.
\end{equation*}
On the other hand, from \eqref{compir}, for $2\le r, l\le m$,
\begin{align*}
b_{(k-m)+l,k+r}
=&-\frac{\sqrt{a}}{2F}\langle [J_{e_{k+l}}\overline{\mathcal{V}}, (f-2a)\mathcal{V}-J_\mathcal{Z}\mathcal{V}]_\frak{n},e_{k+r}\rangle\\
=&-\frac{\sqrt{a}}{2F}\left\{
(f-2a)\langle [J_{e_{k+l}}\overline{\mathcal{V}}, \mathcal{V}]_\frak{n},e_{k+r}\rangle
-\langle [J_{e_{k+l}}\overline{\mathcal{V}}, J_\mathcal{Z}\mathcal{V}]_\frak{n},e_{k+r}\rangle
\right\}\\
=&-\frac{\sqrt{a}}{2F}\left\{
-(f-2a)\langle\overline{\mathcal{V}}, \mathcal{V}\rangle \langle e_{k+l}, e_{k+r}\rangle
-\langle [\overline{\mathcal{V}}, J_{e_{k+l}}J_\mathcal{Z}\mathcal{V}]_\frak{n},e_{k+r}\rangle
\right\}\\
=&\frac{\sqrt{a}}{2F}\left\{
(f-2a) |\mathcal{V}| \rangle \langle e_{k+l}, e_{k+r}\rangle+ |\mathcal{V}| |\mathcal{Z}|\langle \overline{K}_{\overline{\mathcal{V}},\overline{\mathcal{Z}}}(e_{k+l}), e_{k+r}\rangle
\right\}.
\end{align*}

%

\begin{prop}\label{Bblemma}
We set 
\begin{align}
\label{b1}
b_1=&\frac{1}{2}+\frac{2a}{F}(f-a)=\frac{1}{2}+\frac{a}{2F}|\mathcal{V}|^2,\\
\label{b2}
b_2=&1-\frac{2a}{F}(f-a)=1-\frac{a}{2F}|\mathcal{V}|^2,\\
\label{b3}
b_3=&\frac{1}{F}(f-2a)\sqrt{a(f-a)}=\frac{1}{2F}(f-2a)\sqrt{a}|\mathcal{V}|,\\
\label{b4}
b_4=&\frac{\sqrt{a}}{2F}|\mathcal{Z}|\,|\mathcal{V}|.
\end{align}
Then,  the matrix $H_\theta=(b_{\alpha, \beta})$,
of which some rows and columns are interchanged can be described as follows:
\begin{equation*}\label{matBB}
H_\theta=\left(\begin{array}{cc}
B_0&O\\
O&B\\
\end{array}\right),
\end{equation*}
where
\begin{equation}\label{mat}
B=\left(\begin{array}{ccc}
\frac{1}{2}\mathrm{id}_{\ker(\ad(\overline{\mathcal{V}}))_0}&
O&
-b_4 \overline{K}_{\overline{\mathcal{V}},\overline{\mathcal{Z}}}^*
\\
O&
b_1\mathrm{id}_{J_{\overline{\mathcal{Z}}^\perp}\overline{\mathcal{V}}}
&
\ad(\overline{\mathcal{V}})^\ast
\circ
\left(b_3\,\mathrm{id}_{\overline{\mathcal{Z}}^\perp}-b_4 K_{\overline{\mathcal{V}},\overline{\mathcal{Z}}}\right)
\\
-b_4 \overline{K}_{\overline{\mathcal{V}},\overline{\mathcal{Z}}}&
\left(b_3\,\mathrm{id}_{\mathcal{Z}^\perp}+b_4K_{\overline{\mathcal{V}}, \overline{\mathcal{Z}}}\right)\circ
\ad(\overline{\mathcal{V}})
&
b_2\,\mathrm{id}_{\overline{\mathcal{Z}}^\perp}
\end{array}\right).
\end{equation}
Moreover, $b_i, i=1,2,3,4$ satisfy
\begin{equation}\label{bseq}
b_1b_2-b_3^2-b_4^2=\frac{1}{2},\quad
b_1+b_2=\dfrac{3}{2}.
\end{equation}
\end{prop}

Here, we examine the eigenvalues and eigenvectors of $B$.
Let 
$$\bm{x}=U+J_{Z_1}\overline{\mathcal{V}}+Z_2
=\left(\begin{array}{c}U\\J_{Z_1}\overline{\mathcal{V}}\\Z_2\end{array}\right)\in T_pS$$
be an eigenvector of $B$ corresponding to the eigenvalue $\lambda$, where $U\in\ker(\ad(\mathcal{V}))_0$, $Z_1, Z_2\in \mathcal{Z}^\perp$.
The equation $(B-\lambda I)\bm{x}=0$ is equivalent to the following three equations:
\begin{align}
\label{B-I-1}
\left(\frac{1}{2}-\lambda\right)U-b_4\,\overline{K}_{\overline{\mathcal{V}},\overline{\mathcal{Z}}}^*Z_2=0,\\
\label{B-I-2}
\left(b_1-\lambda\right) J_{Z_1}\overline{\mathcal{V}}
+\ad(\overline{\mathcal{V}})^\ast
\circ
\left(b_3\,\mathrm{id}_{\overline{\mathcal{Z}}^\perp}-b_4 K_{\overline{\mathcal{V}},\overline{\mathcal{Z}}}\right)Z_2
=0,\\
\label{B-I-3}
-b_4 \overline{K}_{\overline{\mathcal{V}},\overline{\mathcal{Z}}}U
+\left(b_3\,\mathrm{id}_{\mathcal{Z}^\perp}+b_4K_{\overline{\mathcal{V}}, \overline{\mathcal{Z}}}\right)Z_1
+\left(b_2-\lambda\right)Z_2=0.
\end{align}
From these equations, we derive the conditions that $U, Z_1$, and $Z_2$ must satisfy.

By applying $\mathrm{ad(\overline{\mathcal{V}})}$ to both sides of \eqref{B-I-2},
we have
\begin{equation}\label{B-I-2'}
(b_1-\lambda) Z_1
+\left(b_3\,\mathrm{id}_{\overline{\mathcal{Z}}^\perp}-b_4 K_{\overline{\mathcal{V}},\overline{\mathcal{Z}}}\right)Z_2
=0,
\end{equation}
which means that $Z_1$ is expressed by using $Z_2$.
By multiplying $\left(\frac{1}{2}-\lambda\right)(b_1-\lambda)$ to \eqref{B-I-3} and eliminating $U$ and $Z_1$ using \eqref{B-I-1} and \eqref{B-I-2'}, we obtain
\begin{align*}
0
=&
-\left(b_1-\lambda\right)b_4^2 \overline{K}_{\overline{\mathcal{V}},\overline{\mathcal{Z}}}\overline{K}_{\overline{\mathcal{V}},\overline{\mathcal{Z}}}^\ast Z_2\\
&-\left(\frac{1}{2}-\lambda\right)\left(b_3\,\mathrm{id}_{\mathcal{Z}^\perp}+b_4 K_{\overline{\mathcal{V}},\overline{\mathcal{Z}}}\right)
\left(b_3\,\mathrm{id}_{\mathcal{Z}^\perp}-b_4 K_{\overline{\mathcal{V}},\overline{\mathcal{Z}}}\right)Z_2\\
&+\left(\frac{1}{2}-\lambda\right)\left(b_1-\lambda\right)\left(b_2-\lambda\right)Z_2\nonumber\\
=&
-\left(b_1-\lambda\right)b_4^2 \left(\mathrm{id}_{\mathcal{Z}^\perp}+K_{\overline{\mathcal{V}},\overline{\mathcal{Z}}}^2\right)Z_2
-\left(\frac{1}{2}-\lambda\right)
\left(b_3^2\mathrm{id}_{\mathcal{Z}^\perp}-b_4^2K_{\overline{\mathcal{V}},\overline{\mathcal{Z}}}^2\right)Z_2\\
&+\left(\frac{1}{2}-\lambda\right)\left(\lambda^2-\dfrac{3}{2}\lambda+b_1b_2\right)Z_2\nonumber\\
=&
\left\{
-\left(b_1-\lambda\right)+\left(\frac{1}{2}-\lambda\right)
\right\}b_4^2
\left(\mathrm{id}_{\mathcal{Z}^\perp}+K_{\overline{\mathcal{V}},\overline{\mathcal{Z}}}^2\right)Z_2\\
&+\left(\frac{1}{2}-\lambda\right)
\left(\lambda^2-\frac{3}{2}\lambda+b_1b_2-b_3^2-b_4^2\right)Z_2\\
=&\left(\frac{1}{2}-b_1\right)b_4^2
\left(\mathrm{id}_{\mathcal{Z}^\perp}+K_{\overline{\mathcal{V}},\overline{\mathcal{Z}}}^2\right)Z_2
+\left(\frac{1}{2}-\lambda\right)^2\left(1-\lambda\right)Z_2,
\end{align*}
i.e., 
\begin{align}
K_{\overline{\mathcal{V}},\overline{\mathcal{Z}}}^2 Z_2
=&-\left(
1-\dfrac{\left(\frac{1}{2}-\lambda\right)^2 (1-\lambda)}{b_4^2(b_1-\frac{1}{2})}
\right)Z_2\notag\\
=&-\left\{1+\frac{8F^3}{a^2|\mathcal{V}|^4|\mathcal{Z}|^2}\left(\lambda-\dfrac{1}{2}\right)^2(\lambda-1)\right\}Z_2,\label{BengK2eng}
\end{align}
from which we obtain Theorem \ref{main0} (2-ii-b).
From the above, the following can be concluded about $Z_1, Z_2\in Z^\perp$;
\begin{itemize}
\item[(i)] $Z_2\in L_{\mu}$,
where
\begin{equation}\label{mu}
\mu=-\left\{1+\frac{8F^3}{a^2|\mathcal{V}|^4|\mathcal{Z}|^2}\left(\lambda-\dfrac{1}{2}\right)^2(\lambda-1)\right\},
\end{equation}
\item[(ii)] from \eqref{B-I-2'}, $Z_1$ is determined by $Z_2$ and it can be seen that also $Z_1\in L_\mu$.
\item[(iii)] In addition, when $\lambda = \frac{1}{2}$,
it follows from \eqref{B-I-1} that $Z_2$ is not only in $L_{-1}$ but also in $\ker(\overline{K}_{\overline{\mathcal{V}},\overline{\mathcal{Z}}}^*)$.
\end{itemize}

\begin{rem}
Since $\mu\in [-1, 0]$ and $\frac{F^3}{a^2|\mathcal{V}|^4|\mathcal{Z}|^2}>1$,
we find that $1\ge\lambda\ge \lambda_0 (>0)$, where $\lambda_0$ is the solution of the cubic equation
\begin{equation}
\left(\lambda-\dfrac{1}{2}\right)^2(\lambda-1)=-\dfrac{1}{8}.
\end{equation}
\end{rem}

The condition that $U\in\ker(\ad(\mathcal{V}))_0$ must satisfy depends on whether the value of $\lambda$ is $\frac{1}{2}$ or not.  
In the case $\lambda = \frac{1}{2}$,
substituting \eqref{B-I-2'} into \eqref{B-I-3}, and from \eqref{bseq} and $K_{\overline{\mathcal{V}},\overline{\mathcal{Z}}}^2 Z_2=-Z_2$,
we have $\overline{K}_{\overline{\mathcal{V}},\overline{\mathcal{Z}}}U=0$,
i.e., $U\in \ker(\ad(\overline{\mathcal{V}}))_0\cap \ker(\ad(J_{\overline{\mathcal{Z}}}\overline{\mathcal{V}}))_0$.
In the case of $\lambda\ne\dfrac{1}{2}$,
from \eqref{B-I-1}, we find that $U$ can be expressed using $Z_2$ as
\begin{equation}
U=-\dfrac{b_4}{\lambda-\frac{1}{2}}\overline{K}_{\overline{\mathcal{V}},\overline{\mathcal{Z}}}^*Z_2.
\end{equation}

\begin{rem}
From the definition of $b_1$, we have $0 < b_1 < 1$, so it is possible that $\lambda = b_1$.  
However, in this case, $Z_2 = 0$ follows from equation \eqref{B-I-2'}, $U = 0$ follows from \eqref{B-I-1},
and $Z_1 = 0$ follows from \eqref{B-I-3}.  
Therefore, $b_1$ cannot be an eigenvalue of $B$.
\end{rem}


\begin{rem}
Since $H_\theta$ is positive semi-definite,
it suffices to show that the determinant of $B$ is positive to show that all eigenvalues of $H_\theta$ are positive.
From properties of a determinant and spectral analysis of $K^2$,
we have
\begin{align*}
\det(B)
=&\det\left(\begin{array}{ccc}
\frac{1}{2}\mathrm{id}&
O&
-b_4 \overline{K}^\ast
\\
O&
b_1\mathrm{id}
&
\ad(\overline{\mathcal{V}})^\ast \circ \left(b_3\,\mathrm{id}-b_4 K\right)
\\
-b_4 \overline{K}&
\left(b_3\,\mathrm{id}+b_4K)\circ \ad(\overline{\mathcal{V}}\right)
&
b_2\,\mathrm{id}
\end{array}\right)\\
=&\det\left(\begin{array}{ccc}
\frac{1}{2}\mathrm{id}&
O&
-b_4 \overline{K}^*
\\
O&
b_1\mathrm{id}
&
\ad(\overline{\mathcal{V}})^\ast \circ \left(b_3\,\mathrm{id}-b_4 K\right)
\\
O&
\left(b_3\,\mathrm{id}+b_4K)\circ \ad(\overline{\mathcal{V}}\right)
&
b_2\,\mathrm{id} - 2b_4^2 \overline{K}\,\overline{K}^\ast
\end{array}\right)\\
=&\det\left(\begin{array}{ccc}
\frac{1}{2}\mathrm{id}&
O&
-b_4 \overline{K}^*
\\
O&
b_1\mathrm{id}
&
\ad(\overline{\mathcal{V}})^\ast \circ \left(b_3\,\mathrm{id}-b_4 K\right)
\\
O&
O
&
b_2\,\mathrm{id} - 2b_4^2 \overline{K}\,\overline{K}^\ast
-\frac{1}{b_1}\left(b_3\,\mathrm{id}+b_4K\right)\circ \left(b_3\,\mathrm{id}-b_4 K\right)
\end{array}\right)\\
=&\dfrac{1}{2^{k-m-1}}
\det\left(
b_1b_2\,\mathrm{id} - 2b_1b_4^2 \left(\mathrm{id}+K^2\right)
-\left(b_3^2\,\mathrm{id}-b_4^2K^2\right)
\right)\\
=&\dfrac{1}{2^{k-m-1}}
\det\left(
(b_1b_2-b_3^2-b_4^2) \mathrm{id} - 2b_1b_4^2 \left(\mathrm{id}+K^2\right)
+b_4^2\left(\mathrm{id}+K^2\right)
\right)\\
=&\dfrac{1}{2^{k-m-1}}
\det\left(
\dfrac{1}{2}\mathrm{id} - (2b_1-1)b_4^2 \left(\mathrm{id}+K^2\right)
\right)\\
=&\dfrac{1}{2^{k-2}}
\det\left(
\mathrm{id} - \dfrac{a^2 |\mathcal{V}|^4 |\mathcal{Z}|^2}{2F^3}\left(\mathrm{id}+K^2\right)
\right)
\end{align*}
Since the eigenvalues $\mu$ of $K^2$ satisfies $-1\le \mu\le 0$ and $\dfrac{a^2 |\mathcal{V}|^4 |\mathcal{Z}|^2}{2F^3}<\dfrac{1}{2}$ holds,
we find that $\det(B)>0$.
\end{rem}

\end{document}